\documentclass[11pt,oneside]{article}
\usepackage[centertags]{amsmath}
\usepackage{amssymb}
\usepackage{amsthm}
\usepackage{hyperref}
\vfuzz2pt 
\hfuzz2pt 
\setlength{\textheight}{8.8in}          
\setlength{\textwidth}{6.4in}           
\setlength{\evensidemargin}{0.0in}      
\setlength{\oddsidemargin}{0.0in}       
\setlength{\headsep}{10pt}              
\setlength{\topmargin}{0.0in}           
\setlength{\columnsep}{0.375in}
\newtheorem{thm}{Theorem}[section]
\newtheorem{cor}[thm]{Corollary}
\newtheorem{lem}[thm]{Lemma}
\newtheorem{prop}[thm]{Proposition}
\newtheorem{defn}[thm]{Definition}

\numberwithin{equation}{section}

\newcommand{\wbs}{\boldsymbol{w}}
\newcommand{\xbs}{\boldsymbol{x}}
\newcommand{\vbs}{\boldsymbol{v}}
\newcommand{\kbs}{\boldsymbol{k}}

\newcommand{\Ybs}{\boldsymbol{Y}}
\newcommand{\ybs}{\boldsymbol{y}}

\newcommand{\X}{\boldsymbol{X}}
\newcommand{\Real}{\mathbb R}
\newcommand{\ml}{\mathfrak{m}}
\newcommand{\mubs}{\boldsymbol{\mu}}

\newcommand{\bio}{\boldsymbol{\iota}}
\newcommand{\boeta}{\boldsymbol{\eta}}
\newcommand{\lambdabs}{\boldsymbol{\lambda}}
\newcommand{\mmodels}{\boldsymbol{\vdash}}
\newcommand{\zbs}{\boldsymbol{z}}
\newcommand{\punt}{\mathbf{.}}

\begin{document}
\title{Multivariate Bernoulli and Euler polynomials via L\'evy processes}
\author{E. Di Nardo \footnote{Dipartimento di Matematica e Informatica,
Universit\`a degli Studi della Basilicata, Viale dell'Ateneo
Lucano 10, 85100 Potenza, Italia, elvira.dinardo@unibas.it}, I.
Oliva \footnote {Dipartimento di Matematica, Universit\`a di
Bologna, Piazza di Porta S. Donato 5, 40126 Bologna, Italia,
immacolata.oliva2@unibo.it}}
\date{\today}
\maketitle
\begin{abstract}
By a symbolic method, we introduce multivariate Bernoulli and Euler polynomials 
as powers of polynomials whose coefficients involve multivariate L\'evy processes. Many properties 
of these polynomials are stated straightforwardly thanks to this representation, which
could be easily implemented in any symbolic manipulation system.  A very simple  
relation between these two families of multivariate polynomials
is provided. 
\end{abstract}

\textsf{\textbf{keywords}:}
multivariate moment, multivariate Bernoulli polynomial, multivariate Euler polynomial, 
multivariate L\'evy process, umbral calculus.

\section{Introduction} 
Quite recently many authors have obtained a moment representation for various families of
polynomials. For the multivariate Hermite polynomials $H_{\vbs}(\xbs),$ with $\vbs = (v_1, \ldots, v_d) \in \mathbb{N}_0^d$ a multi-index, i.e. a vector of nonnegative integers, the moment representation 
\begin{equation}
H_{\vbs}(\xbs)=E[(\xbs \Sigma^{-1} + i \Ybs)^{\vbs}]
\label{(momrepr)}
\end{equation}
has been given by Withers \cite{Withers} with $E$ the expectation symbol, $i$ the imaginary unit,  
$\Ybs \simeq N({\bf 0}, \Sigma^{-1})$ and $\Sigma$ a covariance matrix of full rank $d$.  Making use of the Laplace distribution and of the Gamma distribution, Sun \cite{Sun} gives a moment representation for Bernoulli polynomials, Euler polynomials and Gegenbauer polynomials in the univariate case; see also references therein.
\par
By using a symbolic method, known in the literature as the {\it classical umbral calculus} \cite{SIAM},
a different moment representation for multivariate Hermite polynomials is provided in \cite{Dinardo1},
without using the imaginary unit. Umbral methods are essentially based on a symbolic device consisting 
in dealing with sequences of numbers, indexed by nonnegative integers, where the subscripts are treated as powers. Under suitable hypothesis (see \cite{dinoli} for a detailed discussion), these sequences of numbers could be interpreted as moments of random variables (r.v.'s). 
\par 
In this paper we show how the classical umbral calculus allows us to give a moment representation like 
(\ref{(momrepr)}) also for the multivariate Bernoulli polynomials $\mathcal{B}_{\vbs}^{(t)}(\xbs)$ and the Euler polynomials $\mathcal{E}_{\vbs}^{(t)}(\xbs),$ where an additional real parameter $t \in \mathbb{R}$ is included. 
While in (\ref{(momrepr)}) the random \lq\lq part\rq\rq  is represented by the multivariate Gaussian random vector $\Ybs,$ in the representation here introduced, the random \lq\lq part\rq\rq  is represented by a multivariate L\`evy process \cite{sato}. Thanks to this representation, we point out many similarities and a new relation between these two families of polynomials. Open questions are addressed at the end of the paper.
\section{Multivariate umbral calculus}
The classical umbral calculus has reached a more advanced level compared with the
notions that we resume in this section. We only recall terminology, notation and
basic definitions strictly necessary to deal with the topic of the paper. We skip the proofs,
the reader interested in is referred to \cite{Dinardo1, Dinardoeurop}.

A univariate umbral calculus consists of an alphabet $\mathcal{A}=\{\alpha, \beta, \ldots\}$ of {\it umbrae}, and an {\it evaluation} linear functional $E: {\mathbb R}[\mathcal{A}]\mapsto {\mathbb R},$ defined on the polynomial ring ${\mathbb R}[\mathcal{A}]$
such that
\begin{description} 
\item[{\it i)}]  $E[1] = 1;$ 
\item[{\it ii)}] $E[\alpha^i \beta^j \cdots] = E[\alpha^i] E[\beta^j] \cdots$ ({\it uncorrelation property}) for distinct umbrae $\alpha, \beta, \ldots$ and nonnegative integers $i,j,\ldots.$
\end{description}
A sequence $a_0 = 1, a_1, a_2, \ldots \in \Real$ is umbrally represented by an umbra
$\alpha$ if $E[\alpha^n] = a_n,$ for all nonnegative integers $n.$ The element $a_n$ is the $n$-th \emph{moment} of
the umbra $\alpha.$ The same sequence of moments could be represented by infinitely many and distinct umbrae. 
More precisely, the umbrae $\alpha$ and $\gamma$ are said to be \emph{similar} if $E[\alpha^n]=E[\gamma^n]$ 
for all nonnegative integers $n,$ in symbols $\alpha \equiv \gamma.$ 
\par
An umbra looks like the framework of a r.v. with no reference to any probability
space. The way to recognize the umbra corresponding to a r.v. is to characterize 
the sequence of moments $\{a_n\}$. When this sequence exists, we can 
compare the moment generating function (m.g.f.) of the r.v. with the so-called 
{\it generating function} (g.f.) of the umbra, that is the formal power series 
\begin{equation}
f(\alpha,z)=1 + \sum_{n \geq 1} a_n \frac{z^n}{n!}.
\label{(gf)}
\end{equation}
If the moments of the r.v. are defined only up to some finite $m,$ then one works with sequences of $m$ elements only;
see \cite{Stanley} for further details. 
\par
Let us consider a $d$-tuple of umbral monomials $\mubs=(\mu_1, \ldots, \mu_d)$ and set
$\mubs^{\vbs} = \mu_1^{v_1} \cdots \mu_d^{v_d}.$ A sequence $\{g_{\vbs}\}_{\vbs \in \mathbb{N}_0^d} \in {\mathbb R},$ with $g_{\vbs} = g_{v_1, \ldots, v_d}$ and $g_{\bf 0} = 1,$ is
umbrally represented by the $d$-tuple $\mubs$ if $E[\mubs^{\vbs}] = g_{\vbs},$
for all $\vbs \in \mathbb{N}_0^d.$ The elements
$\{g_{\vbs}\}_{\vbs \in \mathbb{N}_0^d}$ are the {\it
multivariate moments} of $\mubs.$ If $\{\mu_i\}_{i=1}^d$ are umbral monomials 
with disjoint supports \footnote{The support of an umbral polynomial $p \in {\mathbb R}[\mathcal{A}]$ is the set of all umbrae in $\mathcal{A}$ which occur in
$p.$} then $g_{\vbs}= E[\mu_1^{v_1}] \cdots E[\mu_d^{v_d}].$ 
The g.f. of the $d$-tuple $\mubs$ is
\begin{equation}
f(\mubs, \zbs) = E[e^{\mu_1 z_1 + \cdots + \mu_d z_d}] = 1 + \sum_{k \geq 1} \sum_{\vbs \in \mathbb{N}_0^d, |\vbs|=k} g_{\vbs} \frac{\zbs^{\vbs}}{\vbs!}
\label{(genfun)}
\end{equation}
where $\zbs^{\vbs}=z_1^{v_1} \cdots z_d^{v_d}, |\vbs|=v_1 + \cdots + v_d$ and $ \vbs!=v_1! \cdots v_d!.$
Two $d$-tuples $\mubs_1$ and $\mubs_2$ are said to be {\it similar}, in symbols $\mubs_1 \equiv \mubs_2,$ if and only if $f(\mubs_1,\zbs)=f(\mubs_2,\zbs),$ that is $E[\mubs_1^{\vbs}]=E[\mubs_2^{\vbs}]$ for all $\vbs \in \mathbb{N}_0^d.$ 
They are said to be {\it uncorrelated} if and only if $E[\mubs_1^{\vbs} \mubs_2^{\wbs}]= E[\mubs_1^{\vbs}]E[\mubs_2^{\wbs}]$ for all $\vbs, \wbs \in  \mathbb{N}_0^d.$

\medskip
{\bf Multivariate Bernoulli umbra.} Let $\iota$ be the Bernoulli umbra \cite{SIAM}, that is the umbra with g.f. 
$f(\iota,z) = z/(e^z-1),$ whose $n$-th coefficient is the $n$-th Bernoulli number.  
\begin{defn} \label{a}  The \emph{multivariate Bernoulli umbra} $\bio$ is the 
$d$-tuple $(\iota, \ldots, \iota),$ with all elements equal to the Bernoulli umbra $\iota.$
\end{defn}
From Definition \ref{a} and (\ref{(genfun)}), we have 
\begin{equation}
f(\bio, \zbs) = E[e^{\iota z_1 + \cdots + \iota z_d}] = f(\iota, z_1 + \cdots + z_d) = 
\frac{z_1 + \cdots + z_d}{e^{z_1 + \cdots + z_d} - 1}.
\label{(genfunmulber)}
\end{equation}
\begin{defn}
The \emph{multivariate Bernoulli numbers} $\{B_{\vbs}^{(1)}\}_{\vbs \in \mathbb{N}_0^d}$ are the coefficients of the g.f. (\ref{(genfunmulber)}), that is $B_{\vbs}^{(1)} = E[\bio^{\vbs}].$ 
\end{defn} 
Since $E[\bio^{\vbs}]=E[\iota^{v_1} \iota^{v_2} \cdots \iota^{v_d}]$ the following result is proved. 
\begin{prop} \label{ab}
$B_{\vbs}^{(1)} =  E[\iota^{|\vbs|}]$ for all $\vbs \in \mathbb{N}_0^d.$
\end{prop}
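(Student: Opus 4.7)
\medskip

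\noindent\textbf{Proof plan for Proposition \ref{ab}.} The statement is essentially an unpacking of Definition \ref{a}, so the plan is to reduce $B_{\vbs}^{(1)}$ to a moment of a power of the single Bernoulli umbra $\iota$ and then check the reduction in two consistent ways.

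First, I would start from the definition $B_{\vbs}^{(1)} = E[\bio^{\vbs}]$ and expand $\bio^{\vbs}$ using the convention $\mubs^{\vbs} = \mu_1^{v_1} \cdots \mu_d^{v_d}$ introduced before~(\ref{(genfun)}). Since by Definition \ref{a} every component of $\bio$ is literally the umbra $\iota$, the product $\iota^{v_1} \iota^{v_2} \cdots \iota^{v_d}$ is just ordinary multiplication of powers of the \emph{same} symbol. The key point, which deserves emphasis so that the reader does not misapply the uncorrelation property (which requires \emph{distinct} umbrae, as stated in ii)), is that here all factors coincide, and therefore $\iota^{v_1} \iota^{v_2} \cdots \iota^{v_d} = \iota^{v_1 + \cdots + v_d} = \iota^{|\vbs|}$. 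Applying the linear functional $E$ yields $E[\bio^{\vbs}] = E[\iota^{|\vbs|}]$, which is the claim.

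As a cross-check (and to make the argument self-contained at the generating-function level), I would also reconcile this with (\ref{(genfunmulber)}). Setting $z = z_1 + \cdots + z_d$ and using the univariate expansion $f(\iota,z) = 1 + \sum_{n \geq 1} E[\iota^n]\, z^n/n!$, one has
\[
f(\bio,\zbs) \;=\; f(\iota, z_1 + \cdots + z_d) \;=\; \sum_{n \geq 0} E[\iota^n]\, \frac{(z_1 + \cdots + z_d)^n}{n!}.
\]
Applying the multinomial expansion $(z_1 + \cdots + z_d)^n = \sum_{|\vbs|=n} \frac{n!}{\vbs!}\, \zbs^{\vbs}$ and comparing coefficients with the general expression (\ref{(genfun)}) for $f(\bio,\zbs)$ recovers $B_{\vbs}^{(1)} = E[\iota^{|\vbs|}]$.

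There is no genuine obstacle here: the proof is essentially a one-line symbolic computation. The only subtlety worth flagging is precisely the one mentioned above, namely that the reduction $\iota^{v_1}\cdots\iota^{v_d} = \iota^{|\vbs|}$ is simple power arithmetic in $\mathbb{R}[\mathcal{A}]$ and has nothing to do with the uncorrelation axiom, which would not apply since the umbrae involved are not distinct.
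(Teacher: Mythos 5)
Your proof is correct and follows essentially the same route as the paper, which simply observes that $E[\bio^{\vbs}]=E[\iota^{v_1}\iota^{v_2}\cdots\iota^{v_d}]=E[\iota^{|\vbs|}]$ since all components of $\bio$ are the same umbra $\iota$ (so the uncorrelation axiom is indeed irrelevant, as you rightly flag). The generating-function cross-check is a harmless addition consistent with (\ref{(genfunmulber)}).
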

Set $\binom{\vbs}{\kbs}=\binom{v_1}{k_1} \cdots \binom{v_d}{k_d}$  for $\vbs=(v_1, \ldots, v_d), \kbs = (k_1, \ldots, k_d) \in \mathbb{N}_0^d,$ and assume $\kbs \leq \vbs$ if and only if $k_j \leq v_j$ for all $j\in \{1,2,\ldots,d\}.$
\begin{prop}
$B_{\vbs}^{(1)} = \sum_{\kbs \leq \vbs} \binom{\vbs}{\kbs} B_{\kbs}^{(1)}$  for all 
$\vbs \in \mathbb{N}_0^d$ such that $|\vbs| > 1.$ 
\end{prop}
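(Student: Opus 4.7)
\emph{Proof plan.} The idea is to lift the classical univariate Bernoulli recursion to the multivariate setting by exploiting the fact that $\bio=(\iota,\ldots,\iota)$ has all components equal to the same Bernoulli umbra $\iota$.

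By Proposition \ref{ab}, $B_{\vbs}^{(1)} = E[\iota^{|\vbs|}]$ and $B_{\kbs}^{(1)} = E[\iota^{|\kbs|}]$ for every $\kbs \leq \vbs$, so the identity to be proved lives entirely in terms of the univariate Bernoulli umbra. The first step is to expand the umbral polynomial $(1+\iota)^{|\vbs|}$ factor by factor, using $|\vbs|=v_1+\cdots+v_d$ and the binomial theorem:
\[
(1+\iota)^{|\vbs|} \,=\, \prod_{i=1}^d (1+\iota)^{v_i} \,=\, \prod_{i=1}^d \sum_{k_i=0}^{v_i} \binom{v_i}{k_i} \iota^{k_i} \,=\, \sum_{\kbs \leq \vbs} \binom{\vbs}{\kbs} \iota^{|\kbs|}.
\]
Applying $E$ and Proposition \ref{ab} termwise then produces
\[
E\bigl[(1+\iota)^{|\vbs|}\bigr] \,=\, \sum_{\kbs \leq \vbs} \binom{\vbs}{\kbs} B_{\kbs}^{(1)}.
\]

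The second ingredient is the classical univariate identity $(1+\iota)^n \equiv \iota^n$ valid for $n \geq 2$, which is equivalent to the well-known recursion $\sum_{k=0}^{n-1}\binom{n}{k}B_k=0$ for the Bernoulli numbers and is one of the defining properties of the Bernoulli umbra (see \cite{SIAM}). Since $|\vbs|>1$ by hypothesis, this yields $E[(1+\iota)^{|\vbs|}] = E[\iota^{|\vbs|}] = B_{\vbs}^{(1)}$, and equating with the expansion above gives exactly the stated recursion.

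There is no substantive obstacle: the combinatorial step reduces to one application of the binomial theorem per coordinate, and the arithmetic step is a classical umbral identity. The only thing that has to be handled with care is that the $\iota$'s appearing in the different factors $(1+\iota)^{v_i}$ are literally the same umbra, not similar copies, so that the product collapses to $\iota^{|\kbs|}$ rather than to a product of independent powers; this is exactly the feature already exploited in Proposition \ref{ab} and requires no additional argument.
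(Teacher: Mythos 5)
Your proof is correct and follows essentially the same route as the paper: both reduce the multivariate identity to the univariate fact $E[(\iota+1)^k]=E[\iota^k]$ for $k>1$ via a coordinate-wise binomial expansion, the only cosmetic difference being that the paper inserts the unity umbra $\boldsymbol{u}$ to write the right-hand side as $E[(\boldsymbol{\iota}+\boldsymbol{u})^{\vbs}]$ before collapsing to $E[(\iota+1)^{|\vbs|}]$, whereas you work with the scalar $1$ directly. Your closing remark that the $\iota$'s in the different factors are literally the same umbra is exactly the right point of care, and it is the same observation the paper relies on when it writes $E[(\boldsymbol{\iota}+\boldsymbol{u})^{\vbs}]=E[(\iota+u)^{|\vbs|}]$.
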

\begin{proof}
Let $\boldsymbol{u}$ be the $d$-tuple 
$(u,\ldots,u)$ with all elements equal to the unity umbra $u,$ that is the umbra with all moments equal to $1.$  
We have 
$$\sum_{\kbs \leq \vbs} \binom{\vbs}{\kbs} B_{\kbs}^{(1)} = \sum_{\kbs \leq \vbs} \binom{\vbs}{\kbs} E[\boldsymbol\iota^{\kbs}] = \sum_{\kbs \leq \vbs} \binom{\vbs}{\kbs} E[\boldsymbol\iota^{\kbs}] E[\boldsymbol{u}^{\vbs - \kbs}]$$
and by linearity \cite{Dinardo1}  
$$\sum_{\kbs \leq \vbs} \binom{\vbs}{\kbs} E[\boldsymbol\iota^{\kbs}] E[\boldsymbol{u}^{\vbs - \kbs}] = E\left[\sum_{\kbs \leq \vbs} \binom{\vbs}{\kbs} \boldsymbol\iota^{\kbs} \boldsymbol{u}^{\vbs - \kbs}\right] = E[(\boldsymbol\iota + \boldsymbol{u})^{\vbs}]=E[(\iota+u)^{|\vbs|}],$$
as $E[(\boldsymbol\iota + \boldsymbol{u})^{\vbs}] = E[\prod_{i=1}^d (\iota + u)^{v_i}].$ Since $E[(\iota + u)^k]= E[(\iota + 1)^k] = E[\iota^k]$ for all nonnegative $k > 1$ \cite{SIAM}, the result follows from Proposition \ref{ab} 
for $k=|\vbs|.$ 
\end{proof}
{\bf Multivariate Euler umbra.} Let $\eta$ be the Euler umbra, that is the umbra with 
g.f. $f(\eta, z) = 2 e^z/ [e^{2 z} + 1],$ whose $n$-th coefficient is the $n$-th Euler number. 
\begin{defn} \label{b}  The \emph{multivariate Euler umbra} $\boeta$ is the 
$d$-tuple $(\eta, \ldots, \eta),$ with all elements equal to the Euler umbra $\eta.$
\end{defn}
From Definition \ref{b} and (\ref{(genfun)}), we have 
\begin{equation}
f(\boeta, \zbs) = E[e^{\eta z_1 + \cdots + \eta z_d}] = f(\eta, z_1 + \cdots + z_d) = \frac{2 e^{(z_1 + \cdots + z_d)}}{e^{2(z_1 + \cdots + z_d)} + 1}.
\label{(genfunmuleul)}
\end{equation}
\begin{defn}
The \emph{multivariate Euler numbers} $\{\mathfrak{E}_{\vbs}^{(1)}\}_{\vbs \in \mathbb{N}_0^d}$ are 
the coefficients of the g.f. (\ref{(genfunmuleul)}), that is $\mathfrak{E}_{\vbs}^{(1)} = E[\boeta^{\vbs}].$ 
\end{defn} 
\begin{prop} 
$\mathfrak{E}_{\vbs}^{(1)} = E[\eta^{|\vbs|}]$ for all $\vbs \in \mathbb{N}_0^d.$
\end{prop}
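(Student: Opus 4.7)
The plan is to mirror the short argument that gave Proposition \ref{ab}: the statement should follow directly from Definition \ref{b} plus the fact that all $d$ slots of $\boeta$ carry the \emph{same} umbra $\eta$, with no need for the uncorrelation axiom (which in any case would not apply, precisely because the components are not distinct umbrae).

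First I would unwind the notation: by Definition \ref{b} and the multivariate-moment convention $\boeta^{\vbs}=\eta^{v_1}\eta^{v_2}\cdots \eta^{v_d}$, the right-hand side is a product of powers of a \emph{single} element of the alphabet $\mathcal{A}$. Inside the polynomial ring $\mathbb{R}[\mathcal{A}]$ this product collapses, as an ordinary monomial in one symbol, to $\eta^{v_1+v_2+\cdots+v_d}=\eta^{|\vbs|}$. Applying the linear functional $E$ then yields $\mathfrak{E}_{\vbs}^{(1)} = E[\boeta^{\vbs}] = E[\eta^{|\vbs|}]$, which is the claim.

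As a sanity check, one could equivalently read the result off the generating function. From (\ref{(genfunmuleul)}) one has $f(\boeta,\zbs)=f(\eta,z_1+\cdots+z_d)=\sum_{n\geq 0} E[\eta^n]\,(z_1+\cdots+z_d)^n/n!$. Expanding $(z_1+\cdots+z_d)^n=\sum_{|\vbs|=n}\binom{n}{\vbs}\zbs^{\vbs}$ and matching with (\ref{(genfun)}) gives $\mathfrak{E}_{\vbs}^{(1)}=E[\eta^{|\vbs|}]$ for every $\vbs$.

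There is no real obstacle: the proof is essentially a one-liner. The only conceptual point worth underlining is exactly the one just made, namely that the collapse $\eta^{v_1}\cdots\eta^{v_d}=\eta^{|\vbs|}$ is a purely syntactic identity in $\mathbb{R}[\mathcal{A}]$ and does not rely on any independence-type property; this is what makes the Bernoulli case and the Euler case share verbatim the same proof.
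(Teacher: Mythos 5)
Your proof is correct and matches the paper's approach: the paper states this proposition without proof precisely because it follows by the same one-line argument used for Proposition \ref{ab}, namely that $\boeta^{\vbs}=\eta^{v_1}\cdots\eta^{v_d}=\eta^{|\vbs|}$ as a monomial in the single umbra $\eta$, so applying $E$ gives the claim. Your remark that the uncorrelation axiom is irrelevant here (the components are not distinct umbrae) is exactly the right conceptual point, and the generating-function cross-check is consistent with (\ref{(genfunmuleul)}).
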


\medskip
{\bf Multivariate L\'evy processes.} One feature of the classical umbral calculus is the feasibility to extend the alphabet $\mathcal{A}$ by adding new symbols \cite{SIAM}, the so-called {\it auxiliary umbrae}, whose moments depend on
moments of elements in $\mathcal{A}.$ A very important example is the so called {\it dot-product} $m \punt \alpha$ 
of a nonnegative integer $m$ and an umbra $\alpha.$ By using the exponential Bell polynomials \cite{Dinardoeurop}, 
the moments of $m  \punt \alpha$ can be expressed in terms of moments of $\alpha,$ since $m \punt \alpha$ represents
a sum of $m$ uncorrelated umbrae similar to $\alpha.$ So we have $f(m \punt \alpha, z)=[f(\alpha,z)]^m$ 
and similarly \cite{Dinardo1} $f(m \punt \mubs, \zbs)= [f(\mubs, \zbs)]^m.$ Thanks to the notion of 
auxiliary umbrae, in this last equality the integer $m$ could be replaced by a real number $t \in \Real$ 
\begin{equation}
f(t \punt \mubs, \zbs)= [f(\mubs, \zbs)]^t.
\label{(gfint)}
\end{equation}
Indeed the multivariate moments of $m \punt \mubs$ are \cite{Dinardo1}
\begin{equation}
E[(m \punt \mubs)^{\vbs}] = \sum_{\lambdabs \mmodels \vbs}
\frac{\vbs!}{\ml(\lambdabs)! \,  \lambdabs!} (m)_{l(\lambdabs)} E[\mubs_{\lambdabs}]
\label{(multmom)}
\end{equation}
where the sum is over all partitions $\lambdabs=(\lambdabs_1^{r_{1}}, \lambdabs_2^{r_{2}}, \ldots)$ 
of the multi-index $\vbs$ \footnote{A partition of
a multi-index $\vbs,$ in symbols $\lambdabs \vdash \vbs,$ is a matrix
$\lambdabs = (\lambda_{ij})$ of nonnegative integers and with no zero
columns in lexicographic order such that $\lambda_{r1}+\lambda_{r2}+\cdots+\lambda_{rk}=v_r$ for
$r=1,2,\ldots,d.$ The length $l(\lambdabs)$ of $\lambdabs$ is the number of columns of $\lambdabs.$ 
The notation $\lambdabs = (\lambdabs_{1}^{r_1}, \lambdabs_{2}^{r_2}, \ldots)$
means that in the matrix $\lambdabs$ there are $r_1$ columns equal to $\lambdabs_{1},$
$r_2$ columns equal to $\lambdabs_{2}$ and so on, with $\lambdabs_{1} <
\lambdabs_{2} < \cdots.$ We set $\ml(\lambdabs)=(r_1, r_2,\ldots).$}, $E[\mubs_{\lambdabs}] = g_{\lambdabs_1}^{r_{1}}
g_{\lambdabs_2}^{r_{2}} \cdots,$ with $g_{\lambdabs_i}$ multivariate moments of $\mubs,$ and $(m)_k$ denotes the lower factorial. From (\ref{(multmom)}), $E[(m \punt \mubs)^{\vbs}]$ results to be a polynomial $q_{\vbs}(m)$ of degree $|\vbs|$ in $m.$ Then, the symbol $t \punt \mubs$ denotes the auxiliary umbra such that 
$E[(t\punt \mubs)^{\vbs}] = q_{\vbs}(t),$ by which (\ref{(gfint)}) follows (see Proposition 2.2 in \cite{Dinardo1}). 
In particular, for the multivariate Bernoulli and Euler umbrae we have
\begin{equation}
f(t \punt \bio, \zbs) = \displaystyle{ \left( \frac{z_1 + \cdots + z_d}{e^{z_1 + \cdots + z_d} - 1} \right)^t} 
\qquad \hbox{and} \qquad f(t \punt \boeta, \zbs) = \displaystyle{ \left( \frac{2 \, e^{z_1 + \cdots + 
z_d}}{e^{2(z_1 + \cdots + z_d)} + 1} \right)^t.}
\label{genfun1}
\end{equation}
The auxiliary umbrae $t \punt \bio$ and $t \punt \boeta$ are symbolic versions
of multivariate L\'evy processes. Indeed, let  $\X = \{\X_t\}_{t \geq 0}$ be a L\'evy process on $\Real^d,$
that is a stochastic process starting from $\boldsymbol{0}$ and with
stationary and independent $d$-dimensional increments. According to the multivariate L\'evy-Khintchine 
formula \cite{sato}, if we assume that $\X$ has a convergent m.g.f. $\varphi_{\scriptscriptstyle{\X}}(\zbs)$ 
in some neighborhood of $\boldsymbol{0}$, then we have 
\begin{equation}
\varphi_{\scriptscriptstyle{\X}}(\zbs) = (\varphi_{\scriptscriptstyle{\X_1}}(\zbs))^t,
\label{(levy1)}
\end{equation}
with $\X_1 = (X_1^{(1)}, \ldots, X_d^{(1)}).$ Within the multivariate umbral calculus, if we denote by $\boldsymbol{\mu}$ the $d$-tuple such that $f(\boldsymbol{\mu},\zbs)=\varphi_{\scriptscriptstyle{\X_1}}(\zbs),$ then the auxiliary umbra $t \punt \boldsymbol{\mu}$ is the umbral counterpart of $\X.$ The auxiliary umbra $t \punt \mubs$ has various algebraic properties paralleling those of $t \punt \alpha$ \cite{Dinardo1}. We recall those
we will use later on:
\begin{equation}
t \punt (c \mubs) \equiv c (t \punt \mubs), \qquad
(t+s) \punt \mubs \equiv t \punt \mubs + s \punt \mubs, \qquad t \punt (\mubs_1 + \mubs_2) \equiv t \punt \mubs_1 +
t \punt \mubs_2  
\label{distributive}
\end{equation}
for $c,s,t \in {\mathbb R},$ with $s \ne t,$ and $\mubs_1$ and $\mubs_2$ uncorrelated $d$-tuples of umbral monomials. 
If we replace $s$ with $-t$ in the second equivalence of (\ref{distributive}), the auxiliary umbra $-t \punt \mubs$ has the remarkable property $-t \punt \mubs + t \punt \mubs \equiv \epsilon,$ where $\epsilon$ is an umbra with g.f.
$f(\epsilon,z)=1.$ The auxiliary umbra $-t \punt \mubs$ is called the {\it inverse} of $t \punt \mubs$ and 
it is such that $-t \punt \mubs \equiv t \punt (-1 \punt \mubs).$ Then also $-t \punt \mubs$ 
is a symbolic version of a multivariate L\'evy process. As example, to keep the length of the paper within bounds but also for the open questions addressed in the last section, we just show the probabilistic counterpart of $-t \punt \bio$ and $-t \punt \boeta.$ \footnote{Probabilistic counterparts of $t \punt \bio$ and $t \punt \boeta$ could be given, but the involved random variables are less
known. This goal goes beyond the aim of the paper.} Indeed the following propositions give the probabilistic interpretation of the $d$-dimensional random vector $\X_1$ in (\ref{(levy1)}) corresponding to $-1 \punt \bio$ and $-1 \punt \boeta$ respectively. 
\begin{prop} \label{int1}
The inverse $-1 \punt \bio$ of the multivariate Bernoulli umbra is the umbral 
counterpart of a $d$-tuple identically distributed to $(U,\ldots, U),$ where 
$U$ is a uniform r.v. on the interval $(0,1).$
\end{prop}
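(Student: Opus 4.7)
The plan is to verify the identification at the level of generating functions, which is the defining feature of the umbral counterpart of a Lévy process: one only needs to check that $f(-1\punt\bio,\zbs)$ coincides with the m.g.f.\ of the random vector $(U,\ldots,U)$. Once this equality is established, the formula (\ref{(levy1)}) applied with $t=-1$ identifies $-1\punt\bio$ as the symbolic version of the Lévy process whose value at time $1$ is $(U,\ldots,U)$.

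First I would specialize the left-hand formula in (\ref{genfun1}) to $t=-1$, obtaining
$$f(-1\punt\bio,\zbs)=\left(\frac{z_1+\cdots+z_d}{e^{z_1+\cdots+z_d}-1}\right)^{-1}=\frac{e^{z_1+\cdots+z_d}-1}{z_1+\cdots+z_d}.$$
Next I would compute the m.g.f.\ of $(U,\ldots,U)$. Since the same r.v.\ $U$ appears in every coordinate,
$$E\!\left[e^{U z_1+\cdots+U z_d}\right]=E\!\left[e^{U(z_1+\cdots+z_d)}\right]=\int_0^1 e^{u(z_1+\cdots+z_d)}\,du=\frac{e^{z_1+\cdots+z_d}-1}{z_1+\cdots+z_d},$$
which matches the previous expression.

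Finally, I would conclude by invoking the umbral characterization recalled just before the statement: the $d$-tuple $\mubs$ associated with a random vector $\X_1$ is determined by $f(\mubs,\zbs)=\varphi_{\X_1}(\zbs)$, so the equality of the two power series above identifies $\X_1\simeq(U,\ldots,U)$ as the time-$1$ increment of the Lévy process symbolically represented by $-1\punt\bio$. There is no real obstacle here: the argument is a one-line generating-function computation plus the integral representation of the m.g.f.\ of a uniform r.v.; the only mild subtlety to flag is that the two exponents $z_i$ are multiplied by the \emph{same} random variable $U$, which is what produces $z_1+\cdots+z_d$ in the exponent and makes the inversion of the Bernoulli g.f.\ come out correctly.
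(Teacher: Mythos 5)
Your proof is correct: the paper states Proposition \ref{int1} without proof, and your generating-function computation --- specializing (\ref{genfun1}) to $t=-1$ and matching the result against $\int_0^1 e^{u(z_1+\cdots+z_d)}\,du = (e^{z_1+\cdots+z_d}-1)/(z_1+\cdots+z_d)$ --- is exactly the argument the paper's framework calls for (compare the proof of Lemma \ref{c}, which proceeds the same way). Your flag that every coordinate carries the \emph{same} $U$, so that the exponent collapses to $U(z_1+\cdots+z_d)$, is the one point worth making explicit, and you made it.
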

\begin{prop} \label{int2}
The inverse $-1 \punt \boeta$ of the multivariate Euler umbra is the umbral 
counterpart of a $d$-tuple identically distributed to $(X,\ldots, X),$ where 
$X=2 Y-1$ with $Y$ a Bernoulli r.v. of parameter $1/2.$ 
\end{prop}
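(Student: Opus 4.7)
The plan is to compute the generating function of $-1 \punt \boeta$ using equation (\ref{genfun1}) specialized at $t=-1$, and then verify that it coincides with the moment generating function of the random vector $(X,\ldots,X)$ where $X = 2Y-1$. Since the umbral counterpart of a multivariate L\'evy process is characterized (via (\ref{(levy1)})) by the g.f.\ of $\boldsymbol{\mu}$ matching the m.g.f.\ of $\X_1$, matching generating functions is exactly what we need.

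First, I would substitute $t=-1$ in the second formula of (\ref{genfun1}) to get
\[
f(-1 \punt \boeta, \zbs) = \frac{e^{2(z_1 + \cdots + z_d)} + 1}{2\, e^{z_1 + \cdots + z_d}} = \frac{e^{(z_1+\cdots+z_d)} + e^{-(z_1+\cdots+z_d)}}{2} = \cosh(z_1+\cdots+z_d).
\]
Next, I would compute the m.g.f.\ of the $d$-dimensional random vector $(X,\ldots,X)$ with $X=2Y-1$, $Y \sim \mathrm{Bernoulli}(1/2)$. Since $X$ takes values $\pm 1$ with equal probability $1/2$, one has
\[
E\bigl[e^{z_1 X + \cdots + z_d X}\bigr] = E\bigl[e^{X(z_1+\cdots+z_d)}\bigr] = \tfrac12 e^{z_1+\cdots+z_d} + \tfrac12 e^{-(z_1+\cdots+z_d)} = \cosh(z_1+\cdots+z_d).
\]

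Comparing the two displays shows that $f(-1 \punt \boeta, \zbs)$ coincides with the m.g.f.\ of $(X,\ldots,X)$, so by the correspondence described after (\ref{(levy1)}), $-1 \punt \boeta$ is the umbral counterpart of $(X,\ldots,X)$. There is no real obstacle here: the argument is a one-line simplification of a hyperbolic-cosine identity together with the elementary m.g.f.\ of a symmetric $\pm 1$ variable. The only subtlety to keep in mind is that the ``random part'' is a \emph{single} random variable $X$ replicated across all $d$ coordinates (mirroring the fact that $\boeta$ is the diagonal $d$-tuple $(\eta,\ldots,\eta)$), which is precisely why evaluating at $z_1+\cdots+z_d$ rather than summing $d$ independent copies is the correct comparison.
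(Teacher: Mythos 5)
Your proof is correct and follows the same route the paper implicitly relies on (it states Proposition \ref{int2} without proof, but identifies umbrae with random vectors precisely by matching the umbral g.f.\ against the m.g.f., as in the discussion around (\ref{(levy1)}) and in the proof of Lemma \ref{c}). The computation $f(-1 \punt \boeta, \zbs) = \cosh(z_1+\cdots+z_d) = E[e^{(z_1+\cdots+z_d)X}]$ is exactly the intended verification.
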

\begin{defn} \label{11}
The $t$-th-order multivariate Bernoulli numbers $\{B_{\vbs}^{(t)}\}_{\vbs \in \mathbb{N}_0^d}$ are the multivariate moments of the multivariate umbra $t \punt \bio,$ that is $B_{\vbs}^{(t)} = E[(t \punt \bio)^{\vbs}].$
\end{defn} 
Definition \ref{11} generalizes the definition of the multivariate Bernoulli numbers  given in \cite{liu}. 
In particular we have $B_{\boldsymbol{0}}^{(0)} = E[(0 \punt \bio)^{ \boldsymbol{0}}]=1$ and 
$B_{\vbs}^{(0)} = E[(0 \punt \bio)^{\vbs}] = 0$ if  $|\vbs| > 0.$
\begin{prop}  \label{liu2B}
$B_{\vbs}^{(t)} = \sum_{\kbs \leq \vbs} \binom{\vbs}{\kbs} B_{\kbs}^{(s)} B_{\vbs - \kbs}^{(t- s)},$ for all
$s,t \in \mathbb{R}$ and $\vbs \in \mathbb{N}_0^d.$ 
\end{prop}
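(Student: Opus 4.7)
The strategy is to apply the second distributive equivalence in (\ref{distributive}) to the decomposition $t = s + (t-s)$. Concretely, I would begin by writing
\[
t \punt \bio \equiv s \punt \bio + (t-s) \punt \bio,
\]
where the two summands are realized as uncorrelated $d$-tuples (which is standard in the classical umbral formalism: one simply chooses representatives with disjoint supports). The equivalence is directly furnished by (\ref{distributive}) as soon as $s \ne t-s$; the single boundary value $s=t/2$ will be dealt with at the end.

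Next, I would take the $\vbs$-th power of both sides and expand componentwise. Setting $\mubs_1 = s \punt \bio$ and $\mubs_2 = (t-s) \punt \bio$, and exploiting that addition of $d$-tuples is componentwise, the ordinary binomial theorem applied in each of the $d$ coordinates yields
\[
(\mubs_1 + \mubs_2)^{\vbs} \;=\; \prod_{i=1}^{d}(\mu_{1,i} + \mu_{2,i})^{v_i} \;=\; \sum_{\kbs \leq \vbs} \binom{\vbs}{\kbs}\, \mubs_1^{\kbs}\, \mubs_2^{\vbs - \kbs}.
\]
Applying $E$, using linearity and the uncorrelation of $\mubs_1$ and $\mubs_2$, I would conclude
\[
B_{\vbs}^{(t)} \;=\; E\bigl[(t \punt \bio)^{\vbs}\bigr] \;=\; \sum_{\kbs \leq \vbs} \binom{\vbs}{\kbs}\, E[\mubs_1^{\kbs}]\, E[\mubs_2^{\vbs - \kbs}] \;=\; \sum_{\kbs \leq \vbs} \binom{\vbs}{\kbs}\, B_{\kbs}^{(s)} B_{\vbs - \kbs}^{(t-s)},
\]
where the final step invokes Definition \ref{11} for both factors.

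The two minor technical points are (a) ensuring that $\mubs_1$ and $\mubs_2$ can be chosen uncorrelated, which is a standard manipulation in the alphabet $\mathcal{A}$, and (b) covering the excluded value $s=t/2$. For (b), formula (\ref{(multmom)}) shows that $E[(r \punt \bio)^{\wbs}]$ is a polynomial in $r$ of degree $|\wbs|$ for every multi-index $\wbs$; consequently both sides of the claimed identity are polynomials in $s$ (with $t$ fixed), and having established equality on $\mathbb{R}\setminus\{t/2\}$ forces equality at $s=t/2$ as well. I expect the only genuine piece of bookkeeping to be the clean componentwise binomial expansion; everything else reduces to applying the algebraic properties of $t \punt \mubs$ already recorded in the excerpt.
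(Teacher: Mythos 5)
Your proof is correct and follows essentially the same route as the paper: decompose $t \punt \bio \equiv (t-s)\punt \bio + s \punt \bio$ via the second equivalence in (\ref{distributive}), expand the $\vbs$-th power binomially, and apply $E$ using uncorrelation together with Definition \ref{11}. The only difference is cosmetic: the paper disposes of its degenerate parameter value ($s=t$) by direct inspection, whereas you patch your excluded value $s=t/2$ by the polynomiality in $s$ of both sides coming from (\ref{(multmom)}), which is an equally valid and in fact slightly more scrupulous way to close that gap.
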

\begin{proof}
For $s=t,$ the proof is  straightforward. For $s,t \in \mathbb{R}$ with $s \ne t,$ 
from the second of (\ref{distributive}), we have $t \punt \bio \equiv 
(t-s)\punt \bio + s \punt \bio,$ so that $E[(t \punt \bio)^{\vbs }]=E[ \{ 
(t-s)\punt \bio + s \punt \bio\}^{\vbs}]$ for all $\vbs \in \mathbb{N}_0^d.$  
The result follows from Definition \ref{11} since $E[\{(t-s)\punt \bio + s \punt \bio\}^{\vbs}] =  
\sum_{\kbs \leq \vbs} \binom{\vbs}{\kbs} E[\{(t-s) \punt \boldsymbol\iota \}^{\vbs - \kbs}] 
E[(s \punt \boldsymbol\iota)^{\kbs}].$ 
\end{proof}
In Proposition \ref{liu2B}, set $t=0.$ We have the following corollary. 
\begin{cor} \label{liu2Bcor}
For all $s \in \mathbb{R}$ we have $\sum_{\kbs \leq \vbs} \binom{\vbs}{\kbs} B_{\kbs}^{(s)} 
B_{\vbs - \kbs}^{(-s)} = 1$ if $\vbs = \boldsymbol{0}$ otherwise being $0$. 
\end{cor}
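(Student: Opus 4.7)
The plan is to read off the corollary directly from Proposition \ref{liu2B} by specializing $t=0$ and then evaluating the left-hand side using the values of $B_{\vbs}^{(0)}$ recorded immediately after Definition \ref{11}. Proposition \ref{liu2B} gives, for every $s,t \in \mathbb{R}$ and every $\vbs \in \mathbb{N}_0^d,$ the convolution-type identity
$$B_{\vbs}^{(t)} = \sum_{\kbs \leq \vbs} \binom{\vbs}{\kbs} B_{\kbs}^{(s)} B_{\vbs - \kbs}^{(t-s)}.$$
Setting $t = 0$ turns the right-hand side into precisely $\sum_{\kbs \leq \vbs} \binom{\vbs}{\kbs} B_{\kbs}^{(s)} B_{\vbs - \kbs}^{(-s)},$ so all that remains is to identify the left-hand side $B_{\vbs}^{(0)}.$

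For this identification I would just quote the observation made right after Definition \ref{11}, namely that $B_{\boldsymbol{0}}^{(0)} = E[(0 \punt \bio)^{\boldsymbol{0}}] = 1$ while $B_{\vbs}^{(0)} = E[(0 \punt \bio)^{\vbs}] = 0$ whenever $|\vbs| > 0.$ Umbrally this is the statement that $0 \punt \bio \equiv \epsilon,$ which is consistent with the generating function formula (\ref{(gfint)}): taking the $0$-th power of $f(\bio,\zbs)$ yields the constant $1,$ which is the g.f.\ of $\epsilon.$ Combining these values with the specialization of Proposition \ref{liu2B} immediately yields the two cases of the corollary.

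No real obstacle is expected; the argument is a one-line substitution and invocation of the already-stated values $B_{\boldsymbol{0}}^{(0)} = 1,$ $B_{\vbs}^{(0)} = 0$ for $|\vbs|>0.$ The only mildly delicate point is to make sure the parameter $s$ is allowed to be arbitrary real (including $s=0,$ where the identity reduces to the trivial $\sum_{\kbs \leq \vbs} \binom{\vbs}{\kbs} \cdot 0 \cdot B_{\vbs-\kbs}^{(0)} + \cdots$ that still gives the correct Kronecker delta in $\vbs$), but this is guaranteed by the fact that Proposition \ref{liu2B} holds for all $s,t \in \mathbb{R}.$
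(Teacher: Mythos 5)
Your proposal is correct and matches the paper's own argument exactly: the paper likewise obtains the corollary by setting $t=0$ in Proposition \ref{liu2B} and invoking the values $B_{\boldsymbol{0}}^{(0)}=1$ and $B_{\vbs}^{(0)}=0$ for $|\vbs|>0$ recorded after Definition \ref{11}. Nothing further is needed.
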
 
\begin{defn} \label{12}
The $t$-th-order multivariate Euler numbers $\{\mathfrak{E}_{\vbs}^{(t)}\}_{\vbs \in \mathbb{N}_0^d}$ are  the
multivariate moments of the multivariate umbra $t \punt \boeta,$ that is $\mathfrak{E}_{\vbs}^{(t)} = E[(t \punt \boeta)^{\vbs}].$
\end{defn} 
Definition \ref{12} generalizes the definition of the multivariate Euler numbers given in \cite{liu}. As before we have $\mathfrak{E}_{\boldsymbol{0}}^{(0)} = E[(0 \punt \boeta)^{ \boldsymbol{0}}]=1,$ and 
$\mathfrak{E}_{\vbs}^{(0)} = E[(0 \punt \boeta)^{\vbs}] = 0$ if  $|\vbs| > 0.$
Proposition \ref{liu2B} and Corollary \ref{liu2Bcor} can be restated also for $\mathfrak{E}_{\vbs}^{(t)}$
since their proofs depend only on the moment umbral representation and not on the properties of
the involved umbrae. 
%
\section{Multivariate Bernoulli and Euler polynomials}
In the classical umbral calculus, we can replace the field ${\Real}$ with 
${\Real}[x_1, \ldots, x_d],$ where $x_1, \ldots, x_d$ are indeterminates \cite{Dinardo1}. Then 
the linear operator $E$ is defined on the polynomial ring ${\Real}[x_1, \ldots, x_d][\mathcal{A}]$ with
values in ${\Real}[x_1, \ldots, x_d].$ The only hypothesis to be added on the linear operator $E$ is that if $\xbs=(x_1, \ldots, x_d)$ then $E[\xbs^{\vbs} \mubs^{\wbs}]=\xbs^{\vbs} E[\mubs^{\wbs}],$ for all $\vbs, \wbs \in \mathbb{N}_0^d.$ 
\begin{defn} \label{momrap1} {\bf (Moment representation of multivariate Bernoulli polynomials)} The multivariate Bernoulli polynomial of order $\vbs \in \mathbb{N}_0^d$ is $B_{\vbs}^{(t)}(\xbs)=E[(\xbs + t \punt \bio)^{\vbs}],$ where $\bio$ is the multivariate Bernoulli umbra and $t \in \Real.$  
\end{defn}
\begin{defn} \label{momrap2} {\bf (Moment representation of multivariate Euler polynomials)} 
The multivariate Euler polynomial of order $\vbs \in \mathbb{N}_0^d$ is $\mathcal{E}_{\vbs}^{(t)}(\xbs)
= E\left\{(\xbs + \frac{1}{2} [ t \punt (\boeta - \boldsymbol{u})])^{\vbs}\right\}$ with 
$t \in \Real, \boldsymbol{u}=(u, \ldots, u)$ a $d$-tuple with all elements equal to the 
unity umbra $u$ and $\boeta$ the multivariate Euler umbra.  
\end{defn}
Definition \ref{momrap1} and \ref{momrap2} means that the multivariate Bernoulli polynomials and the multivariate Euler polynomials are such that 
\begin{equation}
B_{\vbs}^{(t)}(\xbs)= \sum_{\kbs \leq \vbs} \binom{\vbs}{\kbs} \xbs^{\vbs-\kbs} E[(t \punt \bio)^{\kbs}], \qquad
\mathcal{E}_{\vbs}^{(t)}(\xbs) = \sum_{\kbs \leq \vbs} \binom{\vbs}{\kbs} \frac{\xbs^{\vbs - \kbs}}{2^{|\kbs|}} E[ \{ t \punt (\boeta - \boldsymbol{u})\}^{\kbs}] 
\label{(mb)}
\end{equation}
where $t \punt \bio$ and $t \punt (\boeta - \boldsymbol{u})$ are multivariate L\'evy processes. This symbolic moment representation of the coefficients 
simplifies the calculus and is computationally efficient \cite{Dinardo1, dinoli}. In particular, the definition of multivariate Euler polynomials is 
given according to the terminology first introduced by N\"{o}rlund \cite{Nor}. Indeed, from the first equivalence in (\ref{distributive})
we have $\frac{1}{2} [ t \punt (\boeta - \boldsymbol{u})] \equiv  t \punt \left[\frac{1}{2} (\boeta - \boldsymbol{u})\right].$ Thanks to Proposition \ref{int2}, we have $- 1 \punt \left[\frac{1}{2} (\boeta - \boldsymbol{u})\right] \equiv \frac{1}{2} \left[ - 1 \punt \boeta + \boldsymbol{u} \right]$ and this last symbol represents a $d$-tuple identically  distributed to $(Y,\ldots, Y),$ where $Y$ is a Bernoulli r.v. with parameter $1/2.$ 
\begin{prop} \label{1} $\mathcal{B}_{\vbs}^{(t)} (\xbs) = \sum_{\kbs \leq \vbs} \binom{\vbs}{\kbs} \xbs^{\vbs - \kbs} B_{\kbs}^{(t)}$ and  $2^{|\vbs|} \mathcal{E}_{\vbs}^{(t)} \left(\frac{1}{2} \xbs +  \frac{t}{2} \boldsymbol{1} \right) = \sum_{\kbs \leq \vbs} \binom{\vbs}{\kbs}  \xbs^{\vbs - \kbs}  \mathfrak{E}_{\kbs}^{(t)}$  with $\boldsymbol{1}$ the $d$-tuple with all elements equal to $1.$  
\end{prop}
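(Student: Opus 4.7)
For the Bernoulli identity the claim is immediate. Starting from Definition \ref{momrap1}, I would expand $E[(\xbs+t\punt\bio)^{\vbs}]$ by the multi-index binomial theorem, apply the axiom $E[\xbs^{\vbs-\kbs}\mubs^{\kbs}]=\xbs^{\vbs-\kbs}E[\mubs^{\kbs}]$ recalled at the start of the section to pull the powers of $\xbs$ out of $E$, and identify $E[(t\punt\bio)^{\kbs}]=B_{\kbs}^{(t)}$ via Definition \ref{11}. This is essentially the first equation already displayed in (\ref{(mb)}).

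For the Euler identity I would work at the level of the exponential generating function in $\zbs$. The standard consequence of (\ref{(genfun)}), namely $\sum_{\vbs}\frac{\zbs^{\vbs}}{\vbs!}E[(\xbs+\mubs)^{\vbs}]=e^{\xbs\cdot\zbs}\,f(\mubs,\zbs)$, applied to Definition \ref{momrap2} gives the g.f.\ of $\{\mathcal{E}_{\vbs}^{(t)}(\xbs)\}$ as $\Phi_{t}(\xbs,\zbs)=e^{\xbs\cdot\zbs}\,f\bigl(\tfrac{1}{2}[t\punt(\boeta-\boldsymbol{u})],\zbs\bigr)$. Since $\eta$ and $u$ are distinct univariate umbrae and hence uncorrelated, one computes
\[
f(\boeta-\boldsymbol{u},\zbs)=f(\boeta,\zbs)\cdot e^{-(z_{1}+\cdots+z_{d})}=\frac{2}{e^{2(z_{1}+\cdots+z_{d})}+1}.
\]
Combining this with the scalar rule $\tfrac{1}{2}[t\punt(\boeta-\boldsymbol{u})]\equiv t\punt[\tfrac{1}{2}(\boeta-\boldsymbol{u})]$ from (\ref{distributive}) and with (\ref{(gfint)}) yields
\[
\Phi_{t}(\xbs,\zbs)=e^{\xbs\cdot\zbs}\left(\frac{2}{e^{z_{1}+\cdots+z_{d}}+1}\right)^{t}.
\]

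To finish, I substitute $\xbs\mapsto\tfrac{1}{2}\xbs+\tfrac{t}{2}\boldsymbol{1}$ and then rescale $\zbs\mapsto 2\zbs$. The latter multiplies the coefficient of $\zbs^{\vbs}/\vbs!$ by $2^{|\vbs|}$, so it extracts precisely the g.f.\ of $\{2^{|\vbs|}\mathcal{E}_{\vbs}^{(t)}(\tfrac{1}{2}\xbs+\tfrac{t}{2}\boldsymbol{1})\}$. Setting $w=z_{1}+\cdots+z_{d}$, the extra factor $e^{tw}$ generated by the shift $\tfrac{t}{2}\boldsymbol{1}$ combines with $(2/(e^{2w}+1))^{t}$ into $(2e^{w}/(e^{2w}+1))^{t}=f(t\punt\boeta,\zbs)$, so the full g.f.\ becomes $e^{\xbs\cdot\zbs}\,f(t\punt\boeta,\zbs)$. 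This is the g.f.\ of $\{E[(\xbs+t\punt\boeta)^{\vbs}]\}$, and expanding by the same binomial step used in part one produces $\sum_{\kbs\leq\vbs}\binom{\vbs}{\kbs}\xbs^{\vbs-\kbs}\mathfrak{E}_{\kbs}^{(t)}$, as required. The only delicate point is bookkeeping: the two successive rescalings of $\zbs$ must interact with the shift $\tfrac{t}{2}\boldsymbol{1}$ so that the extra $e^{w}$ in the numerator of $f(t\punt\boeta,\zbs)$ appears with the correct exponent; once this is tracked, everything is formal manipulation of power series.
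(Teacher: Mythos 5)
Your proof is correct, and for the Euler identity it takes a genuinely different route from the paper. The paper stays entirely at the level of umbral moments: it splits $\frac{1}{2}\xbs+\frac{t}{2}\boldsymbol{1}+\frac{1}{2}[t\punt(\boeta-\boldsymbol{u})]$ into $\{\frac{t}{2}\boldsymbol{1}-\frac{1}{2}(t\punt\boldsymbol{u})\}+\frac{1}{2}(\xbs+t\punt\boeta)$ via the third equivalence in (\ref{distributive}), expands binomially, and observes that all multivariate moments of $\frac{t}{2}\boldsymbol{1}-\frac{1}{2}(t\punt\boldsymbol{u})$ vanish except the zeroth (because $t\punt\boldsymbol{u}$ has the same moments as the constant $t\boldsymbol{1}$), which collapses the sum to $2^{-|\vbs|}E[(\xbs+t\punt\boeta)^{\vbs}]$. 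You instead verify the same key identity $2^{|\vbs|}\mathcal{E}_{\vbs}^{(t)}(\frac{1}{2}\xbs+\frac{t}{2}\boldsymbol{1})=E[(\xbs+t\punt\boeta)^{\vbs}]$ at the generating-function level, using the substitution $\xbs\mapsto\frac{1}{2}\xbs+\frac{t}{2}\boldsymbol{1}$ followed by the dilation $\zbs\mapsto 2\zbs$; your bookkeeping of the factor $e^{tw}$ against $(2/(e^{2w}+1))^{t}$ is right, and the g.f.\ you obtain matches the one the paper displays after Corollary \ref{tsh}. Your approach is arguably more transparent as a power-series computation and makes clear \emph{why} the argument $\frac{1}{2}\xbs+\frac{t}{2}\boldsymbol{1}$ and the prefactor $2^{|\vbs|}$ are the right normalizations; the paper's approach buys a proof internal to the umbral formalism that never leaves moment notation and exhibits explicitly the cancellation mechanism between the deterministic shift $\frac{t}{2}\boldsymbol{1}$ and the $\boldsymbol{u}$-part of the L\'evy symbol. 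Both rest on the same dictionary (equality of g.f.'s is equality of all multivariate moments), so there is no gap in either direction.
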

\begin{proof}
The former equality follows from the first equality in (\ref{(mb)}), by observing that $E[(t \punt \bio)^{\vbs}] = B_{\vbs}^{(t)}.$ For the latter equality, observe that
$$ E\left[ \left(\frac{1}{2} \xbs +  \frac{t}{2} \boldsymbol{1} + \frac{1}{2} \left[ t \punt (\boeta - \boldsymbol{u})\right]\right)^{\vbs} \right] = \sum_{\kbs \leq \vbs} \binom{\vbs}{\kbs} E\left[ \left\{ \frac{t}{2} \boldsymbol{1}  - \frac{1}{2} (t \punt \boldsymbol{u}) \right\}^{\kbs} \right] \frac{E\left[ \left(\xbs + t \punt \boeta \right)^{\vbs-\kbs} \right]}{2^{|\vbs-\kbs|}}.$$
Since $E\left[ \left\{ \frac{t}{2} \boldsymbol{1}  - \frac{1}{2} (t \punt \boldsymbol{u}) \right\}^{\kbs} \right]=0$ for all $\kbs,$ except when  $\kbs = \boldsymbol{0}$ which gives $1,$ we have 
$$2^{|\vbs|} \mathcal{E}_{\vbs}^{(t)} \left(\frac{1}{2} \xbs +  \frac{t}{2} \boldsymbol{1} \right) = E\left[ \left(\xbs + t \punt \boeta \right)^{\vbs}\right] =  \sum_{\kbs \leq \vbs} \binom{\vbs}{\kbs} \xbs^{\vbs - \kbs} E[(t \punt \boeta)^{\kbs}]$$
by which the result follows. 
\end{proof}
\begin{cor} $B_{\vbs}^{(t)} = \mathcal{B}_{\vbs}^{(t)}(\boldsymbol{0})$ and $\mathfrak{E}_{\vbs}^{(t)} = 2^{|\vbs|} \, \mathcal{E}_{\vbs}^{(t)}\left(\frac{t}{2} \boldsymbol{1}\right),$ with $\boldsymbol{0}$ and $\boldsymbol{1}$ the $d$-tuples with all elements equal to $0$ and $1$ respectively.
\end{cor}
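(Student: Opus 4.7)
The plan is to derive both equalities directly from Proposition \ref{1} by specializing $\xbs$ to $\boldsymbol{0}$. The key observation is that for a multi-index $\vbs - \kbs \in \mathbb{N}_0^d$, the monomial $\boldsymbol{0}^{\vbs-\kbs}$ vanishes unless $\vbs - \kbs = \boldsymbol{0}$, in which case it equals $1$ (with the usual convention $0^0 = 1$ component-wise). Hence any sum of the form $\sum_{\kbs \leq \vbs} \binom{\vbs}{\kbs} \boldsymbol{0}^{\vbs - \kbs} a_{\kbs}$ collapses to the single term $a_{\vbs}$.

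First I would apply this observation to the former equality of Proposition \ref{1}: setting $\xbs = \boldsymbol{0}$ in
\[
\mathcal{B}_{\vbs}^{(t)}(\xbs) = \sum_{\kbs \leq \vbs} \binom{\vbs}{\kbs} \xbs^{\vbs - \kbs} B_{\kbs}^{(t)}
\]
leaves only the $\kbs = \vbs$ summand, which equals $B_{\vbs}^{(t)}$. This gives the first identity $B_{\vbs}^{(t)} = \mathcal{B}_{\vbs}^{(t)}(\boldsymbol{0})$.

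Then I would specialize $\xbs = \boldsymbol{0}$ in the latter equality of Proposition \ref{1}: the left-hand side becomes $2^{|\vbs|} \mathcal{E}_{\vbs}^{(t)}\!\left(\tfrac{t}{2}\boldsymbol{1}\right)$, while the right-hand side again collapses, via the same $\boldsymbol{0}^{\vbs-\kbs}$ argument, to the single term $\mathfrak{E}_{\vbs}^{(t)}$ corresponding to $\kbs = \vbs$. Rearranging gives $\mathfrak{E}_{\vbs}^{(t)} = 2^{|\vbs|} \mathcal{E}_{\vbs}^{(t)}\!\left(\tfrac{t}{2}\boldsymbol{1}\right)$.

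There is no real obstacle here; the corollary is essentially a direct evaluation of Proposition \ref{1} at a specific point, and the only subtlety worth flagging explicitly is the $\boldsymbol{0}^{\boldsymbol{0}} = 1$ convention so that the $\kbs = \vbs$ term survives in both collapsing sums.
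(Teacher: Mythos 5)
Your proposal is correct and matches the paper's intended argument: the corollary is stated without proof precisely because it is the evaluation of both identities in Proposition \ref{1} at $\xbs = \boldsymbol{0}$, with the sum collapsing to the $\kbs = \vbs$ term exactly as you describe. Your explicit mention of the $\boldsymbol{0}^{\boldsymbol{0}}=1$ convention is a reasonable touch but nothing more is needed.
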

\begin{cor} \label{tsh}
$E[\mathcal{B}_{\vbs}^{(t)}(- t \punt \bio)] = E[\mathcal{B}_{\vbs}^{(t)}(t \punt (-1 \punt \bio))] = 0$ and $E[\mathcal{E}_{\vbs}^{(t)}\left(\frac{1}{2} [t \punt (\boldsymbol{u} - 1 \punt \boeta)] \right)]=0.$
\end{cor}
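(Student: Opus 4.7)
The plan is to substitute the given umbral expression for $\xbs$ in Definitions \ref{momrap1}--\ref{momrap2}, interpreting every $\bio, \boeta, \boldsymbol{u}$ introduced by the substitution as a fresh copy uncorrelated with those already bound inside the polynomial, and then to collapse the resulting sum of auxiliary umbrae via the inverse identity $-t \punt \mubs + t \punt \mubs \equiv \epsilon$ recalled after (\ref{distributive}). Since $\epsilon$ has g.f.\ $f(\epsilon,\zbs) = 1$, one has $E[\epsilon^{\vbs}] = 0$ for $\vbs \neq \boldsymbol{0}$; the corollary is understood in this regime, as $\mathcal{B}_{\boldsymbol{0}}^{(t)} = \mathcal{E}_{\boldsymbol{0}}^{(t)} = 1$.

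For the Bernoulli case, substitution into Definition \ref{momrap1} and the uncorrelation of the two copies of $\bio$ give
\begin{equation*}
E[\mathcal{B}_{\vbs}^{(t)}(-t \punt \bio)] = E\bigl[(-t \punt \bio + t \punt \bio)^{\vbs}\bigr] = E[\epsilon^{\vbs}] = 0.
\end{equation*}
The second displayed equality of the corollary reduces to the first through $-t \punt \bio \equiv t \punt (-1 \punt \bio)$, recalled immediately after (\ref{distributive}).

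For the Euler case, substitution into Definition \ref{momrap2}, combined with the scalar factoring $(\tfrac{1}{2}A + \tfrac{1}{2}B)^{\vbs} = 2^{-|\vbs|}(A+B)^{\vbs}$, yields
\begin{equation*}
E\Bigl[\mathcal{E}_{\vbs}^{(t)}\Bigl(\tfrac{1}{2}[t \punt (\boldsymbol{u} - 1 \punt \boeta)]\Bigr)\Bigr] = \frac{1}{2^{|\vbs|}} E\bigl[(A + B)^{\vbs}\bigr],
\end{equation*}
where $A = t \punt (\boldsymbol{u} - 1 \punt \boeta)$ and $B = t \punt (\boeta - \boldsymbol{u})$ are uncorrelated auxiliary umbrae. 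A direct g.f.\ check, writing $s = z_1 + \cdots + z_d$, gives $f(\boldsymbol{u} - 1 \punt \boeta,\zbs) = (e^{2s}+1)/2$ and $f(\boeta - \boldsymbol{u},\zbs) = 2/(e^{2s}+1)$, so $f(A,\zbs) f(B,\zbs) = 1$; hence $A + B \equiv \epsilon$ and the right-hand side vanishes. Equivalently, the third identity of (\ref{distributive}) followed by two applications of the inverse property gives the same conclusion without passing through g.f.'s.

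The only delicate point is the bookkeeping of uncorrelation: each substituted $\bio$, $\boeta$, or $\boldsymbol{u}$ must be read as an independent copy of the one bound inside the definition of the polynomial, for otherwise the identification $E[\mathcal{B}_{\vbs}^{(t)}(\alpha)] = E[(\alpha + t \punt \bio)^{\vbs}]$ (and its Euler analogue) is not justified and the inverse identity does not apply. Once this fresh-copy convention is adopted, the corollary is an immediate consequence of the inverse property of the dot product recalled after (\ref{distributive}).
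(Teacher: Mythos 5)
Your proof is correct and follows exactly the route the paper intends: the corollary is stated without proof precisely because it is an immediate consequence of the moment representations in Definitions \ref{momrap1}--\ref{momrap2} together with the inverse property $-t \punt \mubs + t \punt \mubs \equiv \epsilon$, which is how you argue. Your explicit caveats (the fresh-copy/uncorrelation convention for the substituted umbra, and the implicit restriction to $\vbs \neq \boldsymbol{0}$) are both appropriate and consistent with the paper's conventions.
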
 
Taking into account Definitions \ref{momrap1}, \ref{momrap2} and (\ref{genfun1}), the g.f. of the multivariate Bernoulli and Euler polynomials are respectively 
$$f(\xbs + t \punt \bio, \zbs) = e^{x_1 z_1 + \cdots + x_d z_d} \left(\frac{z_1 + \cdots + z_d}{e^{z_1 + \cdots + z_d} - 1}\right)^t, f\left(\xbs + \frac{1}{2} [ t \punt (\boeta - \boldsymbol{u})], \zbs \right) = 
\frac{2^t e^{x_1 z_1 + \cdots + x_d z_d}}{(e^{z_1 + \cdots + z_d} + 1)^t}.$$
\begin{prop} \label{liu7} If $\xbs=(x_1, \ldots, x_d)$ and $\ybs=(y_1, \ldots, y_d)$ are two $d$-tuples of indeterminates, then
$$\mathcal{B}_{\vbs}^{(t + s)} (\xbs + \boldsymbol{y}) = \sum_{\kbs \leq \vbs} 
\binom{\vbs}{\kbs} \mathcal{B}_{\kbs}^{(t)}(\xbs) \mathcal{B}_{\vbs - \kbs}^{(s)}(\boldsymbol{y}) \quad  \mathcal{E}_{\vbs}^{(t + s)} (\xbs + \boldsymbol{y}) = \sum_{\kbs \leq \vbs} 
\binom{\vbs}{\kbs} \mathcal{E}_{\kbs}^{(t)}(\xbs) \mathcal{E}_{\vbs - \kbs}^{(s)}(\boldsymbol{y}).$$
\end{prop}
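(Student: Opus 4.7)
The plan is to apply the moment representations in Definitions \ref{momrap1} and \ref{momrap2} and to exploit the second equivalence of (\ref{distributive}), which splits the L\'evy component into two uncorrelated pieces indexed by $t$ and $s$. For the Bernoulli identity, start with
$$\mathcal{B}_{\vbs}^{(t+s)}(\xbs+\ybs) = E[(\xbs + \ybs + (t+s)\punt \bio)^{\vbs}]$$
and rewrite $(t+s)\punt \bio \equiv t\punt \bio + s\punt \bio$, where the two summands on the right are realized by uncorrelated similar copies of $\bio$. Regrouping the argument as $(\xbs + t\punt \bio) + (\ybs + s\punt \bio)$ and expanding the $\vbs$-th power via the multi-index binomial formula yields
$$\sum_{\kbs\leq \vbs}\binom{\vbs}{\kbs}\, E[(\xbs + t\punt \bio)^{\kbs}]\, E[(\ybs + s\punt \bio)^{\vbs-\kbs}],$$
after invoking the hypothesis $E[\xbs^{\vbs}\mubs^{\wbs}]=\xbs^{\vbs}E[\mubs^{\wbs}]$ and the uncorrelation of the two copies of $\bio$ to factor the expectation. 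By Definition \ref{momrap1}, the two factors are $\mathcal{B}_{\kbs}^{(t)}(\xbs)$ and $\mathcal{B}_{\vbs-\kbs}^{(s)}(\ybs)$, which gives the first equality.

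For the Euler identity, I would first use the first equivalence of (\ref{distributive}) to absorb the scalar $\frac{1}{2}$, so that the random part in Definition \ref{momrap2} becomes $(t+s)\punt[\frac{1}{2}(\boeta-\boldsymbol{u})]$. Splitting this via the second equivalence of (\ref{distributive}) into two uncorrelated pieces parametrised by $t$ and $s$ and regrouping exactly as in the Bernoulli case gives
$$\mathcal{E}_{\vbs}^{(t+s)}(\xbs+\ybs) = \sum_{\kbs\leq\vbs}\binom{\vbs}{\kbs}\, E\left[\left(\xbs+\frac{1}{2}[t\punt(\boeta-\boldsymbol{u})]\right)^{\kbs}\right]\, E\left[\left(\ybs+\frac{1}{2}[s\punt(\boeta-\boldsymbol{u})]\right)^{\vbs-\kbs}\right],$$
and the two factors equal $\mathcal{E}_{\kbs}^{(t)}(\xbs)$ and $\mathcal{E}_{\vbs-\kbs}^{(s)}(\ybs)$ by Definition \ref{momrap2}.

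The only subtle point is the restriction $s\ne t$ attached to the second equivalence of (\ref{distributive}). I would handle it exactly as in Proposition \ref{liu2B}: the argument above covers the generic case $s\ne t$, while for $s=t$ both sides of each identity are polynomials in $(s,t)$, so agreement on the dense set $\{s\ne t\}$ forces equality everywhere. The real content, hidden inside the one-line invocation of the distributive property, is that the two copies of $\bio$ (respectively of $\boeta-\boldsymbol{u}$) appearing after the split must be genuinely uncorrelated; without this, the key factorisation $E[\mubs_1^{\kbs}\mubs_2^{\vbs-\kbs}]=E[\mubs_1^{\kbs}]E[\mubs_2^{\vbs-\kbs}]$ in the binomial expansion would not be available and the whole argument would collapse. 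This is the step I would check most carefully.
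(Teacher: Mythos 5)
Your proposal is correct and follows essentially the same route as the paper: both use the second equivalence in (\ref{distributive}) to split $(t+s)\punt\bio$ (resp.\ the Euler analogue) into uncorrelated $t$- and $s$-pieces, regroup with $\xbs$ and $\ybs$, and expand binomially, factoring by uncorrelation. Your extra remarks on the $s=t$ case and on the necessity of uncorrelation are sound refinements of details the paper leaves implicit, but they do not change the argument.
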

\begin{proof} 
We replace ${\Real}[x_1, \ldots, x_d]$ with ${\Real}[x_1, \ldots, x_d, y_1, \ldots, y_d]$. From the second equivalence in (\ref{distributive}), we have
$$\mathcal{B}_{\vbs}^{(t + s)} (\xbs + \boldsymbol{y}) = E\{[(\xbs + t \punt \bio)+ (\boldsymbol{y} + s \punt \bio)]^{\vbs}\} = \sum_{\kbs \leq \vbs} \binom{\vbs}{\kbs} E[(\xbs + t \punt \bio)^{\kbs}] E[(\boldsymbol{y} + s \punt \bio)^{\vbs - \kbs}],$$ by which the former equality follows. The latter equality follows by the same arguments.
\end{proof}
\begin{cor}   
$\sum_{\kbs \leq \vbs} \binom{\vbs}{\kbs} \mathcal{E}_{\kbs}^{(t)}(\xbs) 
\mathcal{E}_{\vbs - \kbs}^{(-t)}(\xbs) = \sum_{\kbs \leq \vbs} \binom{\vbs}{\kbs} \mathcal{B}_{\kbs}^{(t)}(\xbs) 
\mathcal{B}_{\vbs - \kbs}^{(-t)}(\xbs) = 2^{|\vbs|} \xbs^{\vbs}.$ 
\end{cor}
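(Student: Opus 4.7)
The plan is to apply Proposition \ref{liu7} with $s=-t$ and $\boldsymbol{y}=\xbs$. For both families this collapses the right-hand side of the claimed identity into a single zero-order polynomial evaluated at $2\xbs$:
\[
\sum_{\kbs\leq\vbs}\binom{\vbs}{\kbs}\mathcal{B}_{\kbs}^{(t)}(\xbs)\,\mathcal{B}_{\vbs-\kbs}^{(-t)}(\xbs)=\mathcal{B}_{\vbs}^{(t+(-t))}(\xbs+\xbs)=\mathcal{B}_{\vbs}^{(0)}(2\xbs),
\]
and similarly with $\mathcal{E}$ in place of $\mathcal{B}$. The whole task then reduces to computing $\mathcal{B}_{\vbs}^{(0)}(2\xbs)$ and $\mathcal{E}_{\vbs}^{(0)}(2\xbs)$.

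Each of these is read off from (\ref{(mb)}): since $E[(0\punt\bio)^{\kbs}]=0$ whenever $|\kbs|>0$ and equals $1$ for $\kbs=\boldsymbol{0}$, only the $\kbs=\boldsymbol{0}$ term survives in the first expansion of (\ref{(mb)}), giving $\mathcal{B}_{\vbs}^{(0)}(\xbs)=\xbs^{\vbs}$. The same observation applied to the umbra $0\punt(\boeta-\boldsymbol{u})$, whose nonzero-index moments likewise vanish by the analogue of Definition \ref{12}, yields $\mathcal{E}_{\vbs}^{(0)}(\xbs)=\xbs^{\vbs}$; the factor $1/2$ and the powers $2^{|\kbs|}$ in the second expansion of (\ref{(mb)}) are irrelevant since all but the $\kbs=\boldsymbol{0}$ contribution is killed. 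Evaluating at $2\xbs$ therefore produces $2^{|\vbs|}\xbs^{\vbs}$ in both cases, finishing both chains of equalities.

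The only point deserving attention (the putative main obstacle, and really a very mild one) is the legitimacy of choosing $s=-t$ inside Proposition \ref{liu7}, whose proof invokes the second equivalence of (\ref{distributive}) under the hypothesis $s\neq t$, i.e.\ here $t\neq 0$. The excluded value $t=0$ is handled separately and trivially: by the reduction above $\mathcal{B}_{\vbs}^{(0)}(\xbs)=\mathcal{E}_{\vbs}^{(0)}(\xbs)=\xbs^{\vbs}$, and the corollary's sum collapses to $\sum_{\kbs\leq\vbs}\binom{\vbs}{\kbs}\xbs^{\kbs}\xbs^{\vbs-\kbs}=2^{|\vbs|}\xbs^{\vbs}$ by the multivariate binomial theorem.
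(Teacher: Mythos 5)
Your proposal is correct and follows essentially the same route as the paper: set $s=-t$ in Proposition \ref{liu7} and observe that $\mathcal{B}_{\vbs}^{(0)}(2\xbs)=\mathcal{E}_{\vbs}^{(0)}(2\xbs)=E[(2\xbs)^{\vbs}]=2^{|\vbs|}\xbs^{\vbs}$. Your extra remark handling the excluded value $t=0$ separately via the multivariate binomial theorem is a small point of care the paper leaves implicit, but it does not change the argument.
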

\begin{proof} 
In Proposition \ref{liu7}, set $s=-t.$ The result follows by observing that $\mathcal{E}_{\vbs}^{(0)}(2 \xbs)
=  \mathcal{B}_{\vbs}^{(0)}(2 \xbs) = E[(2 \xbs)^{\vbs}] = 2^{|\vbs|} \xbs^{\vbs}.$
\end{proof}
\begin{prop}  $\mathcal{B}_{\vbs}^{(t)} (t \boldsymbol{1} - \xbs) = (-1)^{|\vbs|} \mathcal{B}_{\vbs}^{(t)} (\xbs),$ and $\mathcal{E}_{\vbs}^{(t)}(t \boldsymbol{1} - \xbs) = (-1)^{|\vbs|} \mathcal{E}_{\vbs}^{(t)} (\xbs).$
\end{prop}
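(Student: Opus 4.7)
The plan is to reduce both identities to a single umbral shift equivalence that converts the deterministic shift $t\boldsymbol{1}$ into an overall sign flip on the L\'evy part, and then conclude by binomial expansion via Definitions \ref{momrap1} and \ref{momrap2}.

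For the Bernoulli identity, the key intermediate claim I would establish is
$$E[(t\boldsymbol{1} + t\punt\bio)^{\kbs}] = (-1)^{|\kbs|}\, B_{\kbs}^{(t)}, \qquad \kbs \in \mathbb{N}_0^d,$$
by a one-line g.f. check. Setting $Z = z_1 + \cdots + z_d$, adding the constant vector $t\boldsymbol{1}$ multiplies the g.f. by $e^{tZ}$, so the g.f. of $t\boldsymbol{1} + t\punt\bio$ is
$$e^{tZ}\left(\frac{Z}{e^Z-1}\right)^t = \left(\frac{Ze^Z}{e^Z-1}\right)^t = \left(\frac{-Z}{e^{-Z}-1}\right)^t,$$
which is exactly the g.f. of the umbra $-1\cdot(t\punt\bio)$: by the first equivalence in (\ref{distributive}), $-1\cdot(t\punt\bio) \equiv t\punt(-\bio)$, and $f(-\bio,\zbs) = f(\bio,-\zbs) = -Z/(e^{-Z}-1)$. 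Since scalar multiplication by $-1$ acts on the $\kbs$-th moment as $(-1)^{|\kbs|}$, the claim follows. Now splitting $t\boldsymbol{1} - \xbs = (-\xbs) + t\boldsymbol{1}$ and expanding binomially,
$$\mathcal{B}_{\vbs}^{(t)}(t\boldsymbol{1} - \xbs) = \sum_{\kbs \leq \vbs}\binom{\vbs}{\kbs}(-\xbs)^{\vbs-\kbs}\, E[(t\boldsymbol{1} + t\punt\bio)^{\kbs}].$$
Substituting the key identity, the two sign contributions $(-1)^{|\vbs-\kbs|}$ and $(-1)^{|\kbs|}$ collapse into a single $(-1)^{|\vbs|}$, and Proposition \ref{1} then recovers $(-1)^{|\vbs|}\mathcal{B}_{\vbs}^{(t)}(\xbs)$.

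The Euler case is strictly parallel. Using the g.f. of $\xbs + \frac{1}{2}[t\punt(\boeta - \boldsymbol{u})]$ displayed just before Proposition \ref{liu7}, the g.f. of $t\boldsymbol{1} + \frac{1}{2}[t\punt(\boeta - \boldsymbol{u})]$ equals $2^t e^{tZ}/(e^Z+1)^t = 2^t/(e^{-Z}+1)^t$, which is the g.f. of $-1 \cdot \frac{1}{2}[t\punt(\boeta - \boldsymbol{u})]$ (obtained by $\zbs \to -\zbs$ in the $\xbs = \boldsymbol{0}$ instance). Hence $E[(t\boldsymbol{1} + \frac{1}{2}[t\punt(\boeta-\boldsymbol{u})])^{\kbs}] = (-1)^{|\kbs|}\, E[(\frac{1}{2}[t\punt(\boeta-\boldsymbol{u})])^{\kbs}]$, and the very same binomial expansion, now from Definition \ref{momrap2} and the second formula in (\ref{(mb)}), produces $(-1)^{|\vbs|}\mathcal{E}_{\vbs}^{(t)}(\xbs)$.

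I expect no real obstacle beyond bookkeeping: one must keep straight that the scalar multiplication $-1\cdot(t\punt\bio)$, whose $\vbs$-th moment is $(-1)^{|\vbs|}B_{\vbs}^{(t)}$, is a different object from the umbral inverse $-t\punt\bio$ used earlier in Corollary \ref{liu2Bcor} (whose g.f. is $[f(\bio,\zbs)]^{-t}$), and one must split the shift as $(-\xbs) + t\boldsymbol{1}$ rather than $t\boldsymbol{1} + (-\xbs)$ so that the sign contributions from $(-\xbs)^{\vbs-\kbs}$ and from the shift identity combine cleanly into the single factor $(-1)^{|\vbs|}$.
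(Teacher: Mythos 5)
Your proof is correct, but it takes a genuinely different route from the paper's. You establish the reflection at the level of the whole L\'evy umbra, proving $t\boldsymbol{1}+t\punt\bio\equiv -1\cdot(t\punt\bio)$ by an explicit generating-function computation, and then distribute the sign through a binomial expansion in which $(-\xbs)^{\vbs-\kbs}$ contributes $(-1)^{|\vbs-\kbs|}$ and the shifted coefficient contributes $(-1)^{|\kbs|}$. The paper instead writes $t\boldsymbol{1}$ as $t\punt\boldsymbol{u}$, factors $(-1)^{|\vbs|}$ out of the entire $\vbs$-th power in a single step, and reduces everything to the similarity $-(\bio+\boldsymbol{u})\equiv\bio$, which it obtains from the classical univariate identity $E[(\iota+u)^k]=(-1)^kE[\iota^k]$ cited from \cite{SIAM}; for the Euler case it likewise uses the evenness $f(-\tfrac{\boeta}{2},\zbs)=f(\tfrac{\boeta}{2},\zbs)$ of the single increment rather than a generating function for the full process. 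Both arguments rest on the same underlying symmetry, namely the functional identity $(-z)/(e^{-z}-1)=ze^{z}/(e^{z}-1)$ and its Euler analogue, but yours is self-contained (it does not invoke the cited univariate moment identity) at the price of more sign bookkeeping, whereas the paper's is shorter and stays entirely within umbral similarity manipulations at the increment level. Your closing remark distinguishing the scalar multiple $-1\cdot(t\punt\bio)$, whose generating function is $f(t\punt\bio,-\zbs)$, from the umbral inverse $-t\punt\bio$, whose generating function is $[f(\bio,\zbs)]^{-t}$, is exactly the right point of care and is where a careless version of your argument would go wrong.
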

\begin{proof}  
The former equality follows by observing that $\mathcal{B}_{\vbs}^{(t)} (t \boldsymbol{1} - \xbs)=E[(t \punt \boldsymbol{u} - \xbs + 
t \punt \bio)^{\vbs}]$ and 
$$E[(t \punt \boldsymbol{u} - \xbs + t \punt \bio)^{\vbs}] = (-1)^{|\vbs|} E[(t \punt (-\boldsymbol{u}) + \xbs + 
t \punt(- \bio))^{\vbs}] = (-1)^{|\vbs|} E\left\{[\xbs + t \punt (- (\bio + \boldsymbol{u}))]^{\vbs} \right\}.$$
Since $E[(\iota + u)^k] = (-1)^k E[\iota^k]$ for all nonnegative integers $k$ \cite{SIAM}, then 
$E[(- (\bio + \boldsymbol{u}))^{\vbs} ] = (-1)^{|\vbs|} E[(\iota + u)^{|\vbs|}] = 
E[\iota^{|\vbs|}] = E[\bio^{\vbs}].$ Then we have $- (\bio + \boldsymbol{u}) \equiv \bio$ and $t \punt (- (\bio + \boldsymbol{u})) \equiv 
t \punt \bio,$ by which the result follows. Similarly we have 
$$\mathcal{E}_{\vbs}^{(t)} \left(t \boldsymbol{1} - \xbs \right) = E\left[\left(t \boldsymbol{1} - \xbs + t \punt \frac{\boeta}{2} - \frac{t}{2} \boldsymbol{1} \right)^{\vbs} \right] 
= (-1)^{|\vbs|} E\left[\left(\xbs + t \punt \left(-\frac{\boeta}{2} \right) + t \punt \left(-\frac{\boldsymbol{u}}{2} \right) \right)^{\vbs}\right].$$ 
Since $f(-\frac{\boeta}{2},\zbs)=f(\frac{\boeta}{2},\zbs),$ the latter result follows. 
\end{proof} 
As it happens for the univariate case, the multivariate Bernoulli and Euler polynomials share many properties.
Undoubtedly, this is due to the connection between Bernoulli and Euler numbers that here is emphasized by 
the similar multivariate moment representation. Therefore it is reasonable to ask for relations between them. 
We have chosen to show a connection between the multivariate umbrae they are related to, which can be 
translated in a connection between the $t$-th-order multivariate Bernoulli and Euler numbers.  
\begin{lem} \label{c}  If $\bio$ is the multivariate Bernoulli umbra and $\boeta$ is the multivariate Euler umbra,
then $2 \bio \equiv \frac{1}{2} (\boeta - \boldsymbol{u}) + \bio.$
\end{lem}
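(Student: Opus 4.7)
I would verify the similarity by matching generating functions, since two $d$-tuples are similar if and only if their g.f.'s coincide. Write $s = z_1 + \cdots + z_d$ for brevity.

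For the left-hand side, scalar multiplication acts on the g.f.\ by rescaling arguments: directly from the definition (\ref{(genfun)}), $f(c\mubs, \zbs) = f(\mubs, c\zbs)$. Applied to (\ref{(genfunmulber)}) this gives
\[
f(2\bio, \zbs) \;=\; f(\bio, 2\zbs) \;=\; \frac{2s}{e^{2s}-1}.
\]

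For the right-hand side, I take $\boeta$, $\boldsymbol{u}$, and the copy of $\bio$ appearing in the sum to be pairwise uncorrelated, so that g.f.'s of the sums factorize. Since $f(-\boldsymbol{u},\zbs) = e^{-s}$, using (\ref{(genfunmuleul)}) I compute
\[
f(\boeta - \boldsymbol{u},\zbs) \;=\; \frac{2 e^s}{e^{2s}+1}\cdot e^{-s} \;=\; \frac{2}{e^{2s}+1},
\]
then rescale by the scalar factor $1/2$ (again via $f(c\mubs,\zbs)=f(\mubs,c\zbs)$) to obtain $f(\tfrac12(\boeta-\boldsymbol{u}),\zbs) = 2/(e^s+1)$, and finally multiply by $f(\bio,\zbs) = s/(e^s-1)$:
\[
f\!\left(\tfrac12(\boeta - \boldsymbol{u}) + \bio,\, \zbs\right) \;=\; \frac{2}{e^s+1}\cdot\frac{s}{e^s-1} \;=\; \frac{2s}{e^{2s}-1},
\]
which matches the LHS via the factorization $e^{2s}-1 = (e^s-1)(e^s+1)$.

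There is no real obstacle; the only care needed is to pick uncorrelated representatives for $\boeta$, $\boldsymbol{u}$, and $\bio$ so that the g.f.\ of each sum factorizes, and to remember that scalar multiplication of a $d$-tuple rescales the argument of its g.f. Once these two observations are in place, the identity reduces to the elementary algebraic fact $(e^s-1)(e^s+1)=e^{2s}-1$, clothed in umbral language.
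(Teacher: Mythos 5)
Your proof is correct and takes essentially the same route as the paper, which likewise verifies the similarity by computing $f(2\bio,\zbs)=f(\bio,2\zbs)=2(z_1+\cdots+z_d)/[e^{2(z_1+\cdots+z_d)}-1]$ and recognizing this as $f(\boeta-\boldsymbol{u},\tfrac{\zbs}{2})\,f(\bio,\zbs)$. The paper merely compresses into one line the same generating-function factorization you spell out.
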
   
\begin{proof} 
We have $f(2 \bio, \zbs)= f(\bio, 2 \zbs)= 2(z_1 + \cdots + z_d)/[e^{2(z_1 + \cdots + z_d)} - 1] = f(\boeta - \boldsymbol{u}, \frac{\zbs}{2}) f(\bio, \zbs).$
\end{proof}
\begin{thm} \label{d} {\bf (Relation between multivariate Bernoulli and Euler polynomials)}  We have $ 2^{|\vbs|} \mathcal{B}^{(t)}_{\vbs} \left( \frac{\xbs}{2} \right) = E[\mathcal{E}^{(t)}_{\vbs} (\xbs + t \punt \bio)]$ where $\bio$ is the multivariate Bernoulli umbra.
\end{thm}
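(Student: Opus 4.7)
The plan is to push the factor of $2$ inside the umbral argument of $\mathcal{B}_{\vbs}^{(t)}$ using the scaling rule for $t\punt\mubs$, then apply Lemma \ref{c} to trade $2\bio$ for $\frac{1}{2}(\boeta-\boldsymbol{u})+\bio$, and finally recognize the resulting expectation as the expectation of $\mathcal{E}_{\vbs}^{(t)}$ evaluated at the umbral shift $\xbs+t\punt\bio$.

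Concretely, I would start from Definition \ref{momrap1} and write
$$
2^{|\vbs|} \mathcal{B}_{\vbs}^{(t)}\!\left(\tfrac{\xbs}{2}\right)
= 2^{|\vbs|}\, E\!\left[\left(\tfrac{\xbs}{2}+t\punt\bio\right)^{\vbs}\right]
= E\!\left[\left(\xbs+2(t\punt\bio)\right)^{\vbs}\right].
$$
The first equivalence in (\ref{distributive}) with $c=2$ gives $2(t\punt\bio)\equiv t\punt(2\bio)$, so the expression becomes $E[(\xbs+t\punt(2\bio))^{\vbs}]$. Now I invoke Lemma \ref{c}, which states $2\bio\equiv \frac{1}{2}(\boeta-\boldsymbol{u})+\bio$; combining this with the third equivalence in (\ref{distributive}) (applied to uncorrelated copies of $\frac{1}{2}(\boeta-\boldsymbol{u})$ and $\bio$) and then again with the scaling rule yields
$$
t\punt(2\bio) \;\equiv\; t\punt\!\left[\tfrac{1}{2}(\boeta-\boldsymbol{u})\right]+t\punt\bio
\;\equiv\; \tfrac{1}{2}\bigl[t\punt(\boeta-\boldsymbol{u})\bigr]+t\punt\bio.
$$

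Substituting this into the previous expression, I obtain
$$
2^{|\vbs|}\mathcal{B}_{\vbs}^{(t)}\!\left(\tfrac{\xbs}{2}\right)
= E\!\left[\left(\xbs+t\punt\bio+\tfrac{1}{2}\bigl[t\punt(\boeta-\boldsymbol{u})\bigr]\right)^{\vbs}\right],
$$
which, reading Definition \ref{momrap2} with $\xbs$ replaced by the umbral polynomial $\xbs+t\punt\bio$ and applying $E$ once more (using that the copies of $\bio$ and $\boeta$ involved are uncorrelated), equals $E[\mathcal{E}_{\vbs}^{(t)}(\xbs+t\punt\bio)]$, as desired.

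The only delicate point is the invocation of the third equivalence in (\ref{distributive}), which requires $\tfrac{1}{2}(\boeta-\boldsymbol{u})$ and $\bio$ to be uncorrelated; since Lemma \ref{c} is a similarity statement about g.f.'s, I can always realize the right-hand side with representatives on disjoint supports, so this hypothesis poses no real obstacle. Everything else is a mechanical chain of umbral identities.
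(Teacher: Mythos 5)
Your proof is correct and follows essentially the same route as the paper's: move the factor $2$ inside via the scaling rule, apply Lemma \ref{c} together with the third equivalence in (\ref{distributive}) to get $t \punt (2\bio) \equiv \frac{1}{2}[t \punt (\boeta - \boldsymbol{u})] + t \punt \bio$, and recognize the Euler polynomial evaluated at $\xbs + t \punt \bio$. Your explicit remark about realizing the summands on disjoint supports to justify the uncorrelation hypothesis is a detail the paper leaves implicit, but it does not change the argument.
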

\begin{proof} 
From Lemma \ref{c}, we have $t \punt (2 \bio) \equiv t \punt \left[\frac{1}{2} (\boeta - \boldsymbol{u}) + \bio\right] \equiv t \punt \frac{1}{2} (\boeta - \boldsymbol{u}) + t \punt \bio,$ where last equivalence follows form the third equivalence in (\ref{distributive}). The result follows since $2^{|\vbs|} \mathcal{B}^{(t)}_{\vbs} \left( \frac{\xbs}{2} \right) = E[(\xbs  + t \punt (2 \bio))^{\vbs}].$  
\end{proof}
{\bf Conclusions and open questions: multivariate time-space harmonic polynomials.} 
A family of polynomials $\{P(x,t)\}_{t \geq 0}$ is said to be {\it time-space harmonic} with
respect to a stochastic process $\{X_t\}_{t \geq 0}$ if $E[P(X_t,t) \,\, |\; \mathfrak{F}_{s}]
=P(X_s,s),$ for all $s \leq t,$ where $\mathfrak{F}_{s}=\sigma\left( X_\tau : \tau \leq s\right)$
is the natural filtration associated with $\{X_t\}_{t \geq 0}.$ Recently \cite{Dinardo2} the authors have introduced
a new family of polynomials which are time-space harmonic with respect to L\'evy
processes and by which to express all other families of polynomials sharing the same properties. These
polynomials are Appell polynomials and have the form $E[(x + t \punt \alpha)^i]$ for all positive integers $i.$ By  
generalizing the definition of conditional evaluation given in \cite{Dinardo2} to the multivariate 
case, the multivariate Bernoulli and Euler polynomials should result to be time-space harmonic 
with respect to the multivariate L\'evy processes $-t \punt \bio$ and $\frac{1}{2} [t \punt (\boldsymbol{u} - 1 \punt \boeta)]$ respectively, whose probabilistic counterparts could be recovered via Propositions \ref{int1} and \ref{int2}. Indeed, Corollary \ref{tsh} shows that these polynomials share one of the main properties of  time-space harmonic polynomials: when the vector of indeterminates is replaced by the corresponding L\'evy process, their overall mean is zero. We believe that the setting here introduced, together with the one given in \cite{Dinardo2}, could be a fruitful way to build a theory of time-space harmonic polynomials with respect to multivariate L\'evy processes.
\section{Acknowledgements}
We are grateful to the referees for a number of helpful suggestions for improvement in the article.

\end{document}